\documentclass{article}

\usepackage{amsthm,amsmath,amssymb}
\usepackage{tikz,url}
\usepackage{mathpazo}
\usepackage{fancybox}
\usepackage{tikz}
\usepackage{graphicx} 
\usepackage[normalem]{ulem}
\usepackage{hyperref}

\usetikzlibrary{matrix}
\usetikzlibrary{calc}
\usetikzlibrary{positioning}

\newcounter{row2}
\newcounter{row5}
\newcounter{row6}
\newcounter{row7}
\newcounter{col2}
\newcounter{col7}
\newcounter{col6}
\newcounter{col5}

\newcommand\setrowFIVE[5]{
  \setcounter{col5}{1}
  \foreach \n in {#1, #2, #3, #4, #5} {
    \edef\x{\value{col5} - 0.5}
    \edef\y{5.5 - \value{row5}}
    \node[anchor=center] at (\x, \y) {\n};
    \stepcounter{col5}
  }
  \stepcounter{row5}
}
\newcommand\setrowFIVEcolored[5]{%
  \setcounter{col5}{1}%
  \foreach \tc/\txt in {#1, #2, #3, #4, #5}{%
    \edef\x{\value{col5} - 0.5}%
    \edef\y{5.5 - \value{row5}}%
    \node[anchor=center, text=\tc] at (\x, \y) {\txt};%
    \stepcounter{col5}%
  }%
  \stepcounter{row5}%
}

\newcommand\setrowSIX[6]{
  \setcounter{col6}{1}
  \foreach \n in {#1, #2, #3, #4, #5, #6} {
    \edef\x{\value{col6} - 0.5}
    \edef\y{6.5 - \value{row6}}
    \node[anchor=center] at (\x, \y) {\n};
    \stepcounter{col6}
  }
  \stepcounter{row6}
}
\newcommand\setrowSIXcolored[6]{%
  \setcounter{col6}{1}%
  \foreach \tc/\txt in {#1, #2, #3, #4, #5, #6}{%
    \edef\x{\value{col6} - 0.5}%
    \edef\y{6.5 - \value{row6}}%
    \node[anchor=center, text=\tc] at (\x, \y) {\txt};%
    \stepcounter{col6}%
  }%
  \stepcounter{row6}%
}

\newcommand\setrowSEVEN[7]{
  \setcounter{col7}{1}
  \foreach \n in {#1, #2, #3, #4, #5, #6, #7} {
    \edef\x{\value{col7} - 0.5}
    \edef\y{7.5 - \value{row7}}
    \node[anchor=center] at (\x, \y) {\n};
    \stepcounter{col7}
  }
  \stepcounter{row7}
}
\newcommand\setrowSEVENcolored[7]{%
  \setcounter{col7}{1}%
  \foreach \tc/\txt in {#1, #2, #3, #4, #5, #6, #7}{%
    \edef\x{\value{col7} - 0.5}%
    \edef\y{7.5 - \value{row7}}%
    \node[anchor=center, text=\tc] at (\x, \y) {\txt};%
    \stepcounter{col7}%
  }%
  \stepcounter{row7}%
}

\newcommand{\circl}{\hspace{4.3mm}\vspace{3mm}\circle{12}}
\newtheorem{theorem}{Theorem}
\newtheorem{lemma}{Lemma}
\newtheorem{corollary}{Corollary}
\setcounter{theorem}{1}

\colorlet{darkyellow}{yellow!70!brown}

\begin{document}
\title{A Closed Form for the Pulsar Sequence}
\author{Ryan Liu$^1$ \& Vadim Ponomarenko$^2$}
\date{$^1$Del Norte High School, \url{ryanzeyuliu@gmail.com}\\ $^2$San Diego State University, \url{vponomarenko@sdsu.edu}\\[2ex] \today }

\maketitle

\section*{Introduction}

On July 17, 2025, the popular sudoku channel ``Cracking The Cryptic'' released a video \cite{ctc} featuring a puzzle by Pulsar based on a Latin square. A Latin square of size $N$ is a $N\times N$ grid filled in such that each row and column contains the numbers $1$---$N$. The puzzle introduced in the video was to complete a $9\times 9$ Latin square grid such that every row and every column contained each digit $1$---$9$. The restriction is that a fixed spiral of cells were circled where each digit in a circled cell must equal to the number of circled cells that contain that digit. We call these types of puzzles Pulsar puzzles. 
\begin{center}
\begin{tikzpicture}[scale=.5]
  \begin{scope}
    \draw (0, 0) grid (5, 5);

    \setcounter{row5}{1}
    \setrowFIVE {\circl }{ \circl}{\circl }  {\circl }{\circl}
    \setrowFIVE {}{}{}{}{\circl}
    \setrowFIVE {}{\circl}{\circl}{}{\circl}
    \setrowFIVE {}{\circl}{}{}{\circl}
    \setrowFIVE {}{\circl}{\circl}{\circl}{\circl}
    \node[anchor=center] at (2.5, -0.5) {$N=5$};
  \end{scope} 
    
\end{tikzpicture} \begin{tikzpicture}[scale=.5]

  \begin{scope}
    \draw (0, 0) grid (6, 6);

    \setcounter{row6}{1}
    \setrowSIX {\circl }{ \circl}{\circl }  {\circl }{\circl}{\circl}
    \setrowSIX {}{}{}{}{}{\circl}
    \setrowSIX {}{\circl}{\circl}{\circl}{}{\circl}
    \setrowSIX {}{\circl}{}{\circl}{}{\circl}
    \setrowSIX {}{\circl}{}{}{}{\circl}
    \setrowSIX {}{\circl}{\circl}{\circl}{\circl}{\circl}

    \node[anchor=center] at (3, -0.5) {$N=6$};
  \end{scope}
\end{tikzpicture} \begin{tikzpicture}[scale=.5]

  \begin{scope}
    \draw (0, 0) grid (7, 7);

    \setcounter{row7}{1}
    \setrowSEVEN{\circl}{\circl}{\circl }  {\circl }{\circl}{\circl}{\circl}
    \setrowSEVEN{}{}{}{}{}{}{\circl}
    \setrowSEVEN{}{\circl}{\circl}{\circl}{\circl}{}{\circl}
    \setrowSEVEN{}{\circl}{}{}{\circl}{}\circl{}
    \setrowSEVEN{}{\circl}{}{\circl}{\circl}{}{\circl}
    \setrowSEVEN{}{\circl}{}{}{}{}{\circl}
    \setrowSEVEN{}{\circl}{\circl}{\circl}{\circl}{\circl}{\circl}

    \node[anchor=center] at (3.5, -0.5) {$N=7$};
  \end{scope}
\end{tikzpicture}\\
\end{center}
\noindent Below, we have the solutions for puzzles of size $N=5,6,7$.  Note that the red numbers for all three, spiraling out from the center, agree.  This is no coincidence.\\

\begin{tikzpicture}[scale=.5]
  \begin{scope}
    \draw (0, 0) grid (5, 5);
    \setcounter{row5}{1}
    \setrowFIVE {\circl }{ \circl}{\circl }  {\circl }{\circl}
    \setrowFIVE {}{}{}{}{\circl}
    \setrowFIVE {}{\circl}{\circl}{}{\circl}
    \setrowFIVE {}{\circl}{}{}{\circl}
    \setrowFIVE {}{\circl}{\circl}{\circl}{\circl}
    \node[anchor=center] at (2.5, -0.5) {$N=5$};
  \end{scope} 
   \begin{scope}
    \draw (0, 0) grid (5, 5);
    \setcounter{row5}{1}
    \setrowFIVEcolored {blue/5}{blue/2}{blue/3}{blue/4}{blue/1}
    \setrowFIVEcolored {red/4}{red/1}{red/2}{red/3}{blue/5}
    \setrowFIVEcolored {red/2}{blue/4}{blue/5}{red/1}{blue/3}
    \setrowFIVEcolored {red/3}{blue/5}{red/1}{red/2}{blue/4}
    \setrowFIVEcolored {red/1}{blue/3}{blue/4}{blue/5}{blue/2}
    \node[anchor=center] at (2.5, -0.5) {$N=5$};
  \end{scope}    
\end{tikzpicture} \begin{tikzpicture}[scale=.5]

  \begin{scope}
    \draw (0, 0) grid (6, 6);
    \setcounter{row6}{1}
    \setrowSIX {\circl }{ \circl}{\circl }  {\circl }{\circl}{\circl}
    \setrowSIX {}{}{}{}{}{\circl}
    \setrowSIX {}{\circl}{\circl}{\circl}{}{\circl}
    \setrowSIX {}{\circl}{}{\circl}{}{\circl}
    \setrowSIX {}{\circl}{}{}{}{\circl}
    \setrowSIX {}{\circl}{\circl}{\circl}{\circl}{\circl}
    \node[anchor=center] at (3, -0.5) {$N=6$};
  \end{scope}
  \begin{scope}
    \draw (0, 0) grid (6, 6);
    \setcounter{row6}{1}
    \setrowSIXcolored {blue/6}{blue/2}{blue/4}{blue/3}{blue/5}{blue/1}
    \setrowSIXcolored {red/5}{red/1}{red/3}{red/2}{red/4}{blue/6}
    \setrowSIXcolored {red/2}{blue/4}{blue/6}{blue/5}{red/1}{blue/3}
    \setrowSIXcolored {red/3}{blue/5}{red/1}{blue/6}{red/2}{blue/4}
    \setrowSIXcolored {red/4}{blue/6}{red/2}{red/1}{red/3}{blue/5}
    \setrowSIXcolored {red/1}{blue/3}{blue/5}{blue/4}{blue/6}{blue/2}
    \node[anchor=center] at (3, -0.5) {$N=6$};
  \end{scope}  
\end{tikzpicture} \begin{tikzpicture}[scale=.5]

  \begin{scope}
    \draw (0, 0) grid (7, 7);
    \setcounter{row7}{1}
    \setrowSEVEN{\circl}{\circl}{\circl }  {\circl }{\circl}{\circl}{\circl}
    \setrowSEVEN{}{}{}{}{}{}{\circl}
    \setrowSEVEN{}{\circl}{\circl}{\circl}{\circl}{}{\circl}
    \setrowSEVEN{}{\circl}{}{}{\circl}{}\circl{}
    \setrowSEVEN{}{\circl}{}{\circl}{\circl}{}{\circl}
    \setrowSEVEN{}{\circl}{}{}{}{}{\circl}
    \setrowSEVEN{}{\circl}{\circl}{\circl}{\circl}{\circl}{\circl}
    \node[anchor=center] at (3.5, -0.5) {$N=7$};
  \end{scope}
  \begin{scope}
    \draw (0, 0) grid (7, 7);
    \setcounter{row7}{1}
    \setrowSEVENcolored{blue/7}{blue/2}{blue/5}{blue/4}{blue/3}{blue/6}{blue/1}
    \setrowSEVENcolored{red/6}{red/1}{red/4}{red/3}{red/2}{red/5}{blue/7}
    \setrowSEVENcolored{red/2}{blue/4}{blue/7}{blue/6}{blue/5}{red/1}{blue/3}
    \setrowSEVENcolored{red/4}{blue/6}{red/2}{red/1}{blue/7}{red/3}{blue/5}
    \setrowSEVENcolored{red/3}{blue/5}{red/1}{blue/7}{blue/6}{red/2}{blue/4}
    \setrowSEVENcolored{red/5}{blue/7}{red/3}{red/2}{red/1}{red/4}{blue/6}
    \setrowSEVENcolored{red/1}{blue/3}{blue/6}{blue/5}{blue/4}{blue/7}{blue/2}
    \node[anchor=center] at (3.5, -0.5) {$N=7$};
  \end{scope}  
\end{tikzpicture}

Previous work \cite{pulsar1} has found that not only can the $9\times9$ Pulsar puzzle be solved with a unique solution, in fact any Pulsar puzzle of size $N$ can be solved with a unique solution. Furthermore, the puzzle itself is made out of two interlocked spirals of cells. We call the sequence formed by the spiral of circled cells (starting from the center) the Dual Sequence, and we call the sequence formed by the spiral of noncircled cells (starting from the center) the Pulsar Sequence, which was proved to always have the same order regardless of the size of the puzzle. This means that the Pulsar Sequence solution for a puzzle of size $N-1$ will become initial terms for the Pulsar Sequence solution for a puzzle of size $N$.

 The first few terms of the Pulsar Sequence are represented below. 

\[1,2,1,3,2,1,4,2,3,1,5,2,3,4,1,6,2,4,3,5,1,7,2,5,4,3,6,1,8,2,6,4,5,3,\ldots\]

The paper \cite{pulsar1} defines how each spiral can further be broken up into its own pieces, where a piece is defined as consecutive cells in a single row or column. Specifically for a puzzle of size $N$, the $N$-th Dual piece will be the first row of the puzzle. The ($N-1$)-th Dual piece will be the entire $N$-th column of the grid, apart from the top square. The ($N-2$)-th Dual piece will be the entire $N$-th row of the grid, apart from the first and last square, and so on. Similarly, the ($N-1$)-th Pulsar piece will be the first column of the grid, excluding the square in the top row. The ($N-2$)-th Pulsar piece will be the entire 2nd row of the grid, apart from the square in the first and last columns. The ($N-3$)-th pulsar piece will be the entire ($N-1$)-th column of the grid, apart from the squares in the first, second, and last row, and so on. Note that firstly, in a puzzle of size $N$, there will be a total of $N$ Dual pieces and $N-1$ Pulsar pieces due to the construction of the puzzle. Then, each piece of size $N$ will contain a total of $N$ different cells. In the examples below, we mark the Dual pieces with blue lines and the Pulsar pieces with red lines.

\begin{tikzpicture}[scale=.5]
  \begin{scope}
    \draw (0, 0) grid (5, 5);

    \setcounter{row5}{1}
    \setrowFIVE {\circl }{ \circl}{\circl }  {\circl }{\circl}
    \setrowFIVE {}{}{}{}{\circl}
    \setrowFIVE {}{\circl}{\circl}{}{\circl}
    \setrowFIVE {}{\circl}{}{}{\circl}
    \setrowFIVE {}{\circl}{\circl}{\circl}{\circl}
    \node[anchor=center] at (2.5, -0.5) {$N=5$};
      \draw [blue](0.3,4.5) -- (4.8,4.5);
      \draw [blue](4.5,3.8) -- (4.5,0.2);
      \draw [blue](1.2,0.5) -- (3.8,0.5);
      \draw [blue](1.5,1.2) -- (1.5,2.8);
      \draw [blue](2.2,2.5) -- (2.8,2.5);
      \draw [red](0.5,0.2) -- (0.5,3.8);
      \draw [red](1.2,3.5)--(3.8,3.5);
      \draw [red](3.5,2.8)--(3.5,1.2);
      \draw [red](2.2,1.5)--(2.8,1.5);
    
  \end{scope}
\end{tikzpicture} \begin{tikzpicture}[scale=.5]

  \begin{scope}
    \draw (0, 0) grid (6, 6);

    \setcounter{row6}{1}
    \setrowSIX {\circl }{ \circl}{\circl }  {\circl }{\circl}{\circl}
    \setrowSIX {}{}{}{}{}{\circl}
    \setrowSIX {}{\circl}{\circl}{\circl}{}{\circl}
    \setrowSIX {}{\circl}{}{\circl}{}{\circl}
    \setrowSIX {}{\circl}{}{}{}{\circl}
    \setrowSIX {}{\circl}{\circl}{\circl}{\circl}{\circl}

    \node[anchor=center] at (3, -0.5) {$N=6$};
     \draw [blue](0.3,5.5) -- (5.8,5.5);
          \draw [blue](5.5,4.8) -- (5.5,0.2);
          \draw [blue](1.2,0.5) -- (4.8,0.5);
          \draw [blue](1.5,1.2) -- (1.5,3.8);
          \draw [blue](2.2,3.5) -- (3.8,3.5);
          \draw [blue](3.5,2.2) -- (3.5,2.8);
          \draw [red](0.5,0.2) -- (0.5,4.8);
          \draw [red](1.2,4.5)--(4.8,4.5);
          \draw [red](4.5,3.8)--(4.5,1.2);
          \draw [red](2.2,1.5)--(3.8,1.5);
          \draw [red](2.5,2.8)--(2.5,2.2);
  \end{scope}
\end{tikzpicture} \begin{tikzpicture}[scale=.5]

  \begin{scope}
    \draw (0, 0) grid (7, 7);

    \setcounter{row7}{1}
    \setrowSEVEN{\circl}{\circl}{\circl }  {\circl }{\circl}{\circl}{\circl}
    \setrowSEVEN{}{}{}{}{}{}{\circl}
    \setrowSEVEN{}{\circl}{\circl}{\circl}{\circl}{}{\circl}
    \setrowSEVEN{}{\circl}{}{}{\circl}{}\circl{}
    \setrowSEVEN{}{\circl}{}{\circl}{\circl}{}{\circl}
    \setrowSEVEN{}{\circl}{}{}{}{}{\circl}
    \setrowSEVEN{}{\circl}{\circl}{\circl}{\circl}{\circl}{\circl}

    \node[anchor=center] at (3.5, -0.5) {$N=7$};
    		   \draw [blue](0.3,6.5) -- (6.8,6.5);
              \draw [blue](6.5,5.8) -- (6.5,0.2);
              \draw [blue](1.2,0.5) -- (5.8,0.5);
              \draw [blue](1.5,1.2) -- (1.5,4.8);
              \draw [blue](2.2,4.5) -- (4.8,4.5);
              \draw [blue](4.5,2.2) -- (4.5,3.8);
              \draw [blue] (3.2,2.5) -- (3.8,2.5);
              \draw [red](0.5,0.2) -- (0.5,5.8);
              \draw [red](1.2,5.5)--(5.8,5.5);
              \draw [red](5.5,4.8)--(5.5,1.2);
              \draw [red](2.2,1.5)--(4.8,1.5);
              \draw [red](2.5,3.8)--(2.5,2.2);
              \draw [red](3.2,3.5)--(3.8,3.5);

  \end{scope}
\end{tikzpicture}\medskip\\

The Pulsar Sequence pieces can be visualized as follows:
\[\underbrace{1},\underbrace{2,1},\underbrace{3,2,1},\underbrace{4,2,3,1},\underbrace{5,2,3,4,1},\underbrace{6,2,4,3,5,1},\underbrace{7,2,5,4,3,6,1},\underbrace{8,2,6,4,5,3,7,1},\ldots\]

We extend this by arranging the Pulsar Sequence pieces in a triangular array, with the $a$-th Pulsar piece placed in row $a$. We also arrange the Dual Sequence pieces in a separate triangle array with the $a$-th Dual piece placed in row $a$.
Then, we label cells in the Pulsar Sequence triangular array in form $p(a,b)$ and we label cells in the Dual Sequence triangular array in form $d_N(a,b)$ where $a$ is the row the cell is in within its appropriate array, $b$ is the column the cell is in within the array, and $N$ is the size of the puzzle such that $1 \le b \le a$ and $1 \le a \le N$. An example for a puzzle of size $N=5$ is shown:
\medskip\\

\noindent
\begin{minipage}[t]{0.45\linewidth} 
\textbf{Pulsar Sequence Array}\\[15pt]
\small 
{\color{red}
\indent\hspace{1mm}$p(1,1)$\\[12pt]
\indent\hspace{1mm}$p(2,1)\; p(2,2)$\\[12pt]
\indent\hspace{1mm}$p(3,1)\; p(3,2)\; p(3,3)$\\[12pt]
\indent\hspace{1mm}$p(4,1)\; p(4,2)\; p(4,3)\; p(4,4)$\\[12pt]
}
\end{minipage}
\hfill
\begin{minipage}[t]{0.45\linewidth} 
\textbf{Dual Sequence Array}\\[15pt]
\small
{\color{blue}
\indent\hspace{1mm}$d_5(1,1)$\\[12pt]
\indent\hspace{1mm}$d_5(2,1)\; d_5(2,2)$\\[12pt]
\indent\hspace{1mm}$d_5(3,1)\; d_5(3,2)\; d_5(3,3)$\\[12pt]
\indent\hspace{1mm}$d_5(4,1)\; d_5(4,2)\; d_5(4,3)\; d_5(4,4)$\\[12pt]
\indent\hspace{1mm}$d_5(5,1)\; d_5(5,2)\; d_5(5,3)\; d_5(5,4)\; d_5(5,5)$\\[12pt]
}
\end{minipage}

\noindent\begin{tikzpicture}[scale=1.0]
  \matrix (M1) [matrix of nodes,
                nodes in empty cells,
                column sep=-\pgflinewidth,
                row sep=-\pgflinewidth,
                nodes={draw, minimum size=1.05cm, anchor=center, inner sep=0pt, font=\fontsize{8.5}{1}\selectfont}] 
  {
    {} & {} & {} & {} & {}\\
    |[text=red]|{$p(4,1)$} & |[text=red]|{$p(3,3)$} & |[text=red]|{$p(3,2)$} & |[text=red]|{$p(3,1)$} & {}\\
    |[text=red]|{$p(4,2)$} & {} & {} & |[text=red]|{$p(2,2)$} & {}\\
    |[text=red]|{$p(4,3)$} & {} & |[text=red]|{$p(1,1)$} & |[text=red]|{$p(2,1)$} & {}\\
    |[text=red]|{$p(4,4)$} & {} & {} & {} & {} \\   
  };

  \newcommand{\MyCircleAt}[2]{%
    \node[draw,circle,minimum size=0.97cm,font=\small] at (#1) {#2};%
  }

  \MyCircleAt{M1-1-1}{}
  \MyCircleAt{M1-1-2}{}
  \MyCircleAt{M1-1-3}{}
  \MyCircleAt{M1-1-4}{}
  \MyCircleAt{M1-1-5}{}
  \MyCircleAt{M1-2-5}{}
  \MyCircleAt{M1-3-5}{}
  \MyCircleAt{M1-4-5}{}
  \MyCircleAt{M1-5-5}{}
  \MyCircleAt{M1-5-2}{}
  \MyCircleAt{M1-5-3}{}
  \MyCircleAt{M1-5-4}{}
  \MyCircleAt{M1-4-2}{}
  \MyCircleAt{M1-3-2}{}
  \MyCircleAt{M1-3-3}{}

  \matrix (M2) [matrix of nodes,
                nodes in empty cells,
                column sep=-\pgflinewidth,
                row sep=-\pgflinewidth,
                nodes={draw, minimum size=1.05cm, anchor=center, inner sep=0pt, font=\fontsize{7.7}{1}\selectfont},
                right=1.15cm of M1]  
  {
    |[text=blue]|{$d_5(5,5)$} & |[text=blue]|{$d_5(5,4)$} & |[text=blue]|{$d_5(5,3)$} & |[text=blue]|{$d_5(5,2)$} & |[text=blue]|{$d_5(5,1)$}\\
    {} & {} & {} & {} & |[text=blue]|{$d_5(4,4)$}\\
    {} & |[text=blue]|{$d_5(2,1)$} & |[text=blue]|{$d_5(1,1)$} & {} & |[text=blue]|{$d_5(4,3)$}\\
    {} & |[text=blue]|{$d_5(2,2)$} & {} & {} & |[text=blue]|{$d_5(4,2)$}\\
    {} & |[text=blue]|{$d_5(3,1)$} & |[text=blue]|{$d_5(3,2)$} & |[text=blue]|{$d_5(3,3)$} & |[text=blue]|{$d_5(4,1)$} \\   
  };

  \MyCircleAt{M2-1-1}{}
  \MyCircleAt{M2-1-2}{}
  \MyCircleAt{M2-1-3}{}
  \MyCircleAt{M2-1-4}{}
  \MyCircleAt{M2-1-5}{}
  \MyCircleAt{M2-2-5}{}
  \MyCircleAt{M2-3-5}{}
  \MyCircleAt{M2-4-5}{}
  \MyCircleAt{M2-5-5}{}
  \MyCircleAt{M2-5-2}{}
  \MyCircleAt{M2-5-3}{}
  \MyCircleAt{M2-5-4}{}
  \MyCircleAt{M2-4-2}{}
  \MyCircleAt{M2-3-2}{}
  \MyCircleAt{M2-3-3}{}
\end{tikzpicture}\medskip\\

The previous paper \cite{pulsar1} introduces the Dual function $(p(a,b))_N$ on ${1, 2, . . . , N}$ where $(p(a,b))_N=N+1-p(a,b)=d_N(a,b)$, such that applying the Dual function twice results in the original input, i.e. $((p(a,b))_N)_N=p(a,b)$. The paper proves that $d_N(a,b) = N+1-p(a,b)$ gives the Dual sequence, which fills in the circled cells of the puzzle. Additionally, it was proven that the $i$'th Pulsar piece contains $p(i,1), p(i,2), \ldots, p(i,i)$ and satisfies the symmetric sum property $p(i,1)+p(i,i)=p(i,2)+p(i,i-1)=\cdots = i+1$.

The paper \cite{pulsar1} concludes with the problem of finding a general formula for the exact terms of $p(a,b)$. In this paper, we solve this problem and find the closed-form expression for $p(a,b)$ in Theorem 5. In combination with knowing $d_N(a,b)$, we are able to immediately solve any Pulsar puzzle of any size $N$.

\section*{New Main Results}

Adapting the proof for Theorem 1 in the previous paper \cite{pulsar1}, we get the following:

\begin{lemma}\label{lem:first-last-gap}
For each row $j$ with $2 \le j \le N$, the entry in the first column of a Pulsar puzzle is exactly one less than the entry in the last column of the same row.
\end{lemma}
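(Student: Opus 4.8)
The plan is to recast the two entries compared in the lemma as specific cells of the Pulsar and Dual arrays and then read off the claimed gap from the symmetric sum property of a single Pulsar piece. For brevity write $a_{j,c}$ for the entry in row $j$, column $c$ of the $N\times N$ grid. First I would invoke the decomposition recalled above from \cite{pulsar1}: for $2\le j\le N$ the cell $(j,1)$ is non-circled and belongs to the $(N-1)$-th Pulsar piece, which is precisely column $1$ with its top cell removed, while the cell $(j,N)$ is circled and belongs to the $(N-1)$-th Dual piece, which is precisely column $N$ with its top cell removed. Each of these two pieces has $N-1$ cells and fills all of rows $2,\dots,N$ of its column, so the only thing left to determine is in which order the spiral sweeps each piece down the column.

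Next I would settle those two orders; this is the step adapted from the proof of Theorem~1 of \cite{pulsar1}. Tracing the two interlocked spirals outward from the center, the Pulsar spiral enters column $1$ at its top end: it comes along the second row (the $(N-2)$-th Pulsar piece), turns at $(2,1)$, and runs straight down to $(N,1)$. Hence reading column $1$ from row $2$ downward lists the $(N-1)$-th Pulsar piece in its natural order, so $a_{j,1}=p(N-1,\,j-1)$. The Dual spiral instead reaches column $N$ at its bottom end: it comes along the $N$-th row (the $(N-2)$-th Dual piece), turns at $(N,N)$, runs straight up to $(2,N)$, and only afterwards closes the outer ring along the first row. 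Hence reading column $N$ from row $2$ downward lists the $(N-1)$-th Dual piece \emph{in reverse}, so $a_{j,N}=d_N(N-1,\,N+1-j)$. The main obstacle is exactly this bookkeeping: one must check the handedness and turning points of the interlocked spirals carefully enough to be sure the Pulsar piece is listed forwards while the Dual piece is listed backwards. Everything after this is routine algebra.

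Finally I would combine the two identities with the Dual function relation $d_N(a,b)=N+1-p(a,b)$ and the symmetric sum property $p(i,b)+p(i,\,i+1-b)=i+1$, applied with $i=N-1$. Putting $m=j-1$, which runs through $\{1,\dots,N-1\}$ exactly as $j$ runs through $\{2,\dots,N\}$, one has $N+1-j=N-m$, so $a_{j,N}=d_N(N-1,\,N-m)=(N+1)-p(N-1,\,N-m)$; and since the symmetric sum property gives $p(N-1,m)+p(N-1,\,N-m)=N$, this equals $(N+1)-\bigl(N-p(N-1,m)\bigr)=p(N-1,m)+1=a_{j,1}+1$, which is precisely the statement of the lemma.
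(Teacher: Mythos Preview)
Your argument is correct and is essentially the paper's own proof: identify the first column (below row $1$) as the $(N{-}1)$-th Pulsar piece read forwards, the last column as the $(N{-}1)$-th Dual piece read in reverse, convert the Dual entry via $d_N(a,b)=N{+}1-p(a,b)$, and conclude with the symmetric sum property $p(N{-}1,m)+p(N{-}1,N{-}m)=N$. The only difference is that you spell out the spiral-orientation bookkeeping in more detail than the paper does.
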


\begin{proof}
The first column of the puzzle, minus the cell in the first row, is the $(N-1)$-th piece of the Pulsar sequence, so the corresponding element in $P$ is $p(N-1,j-1)$.  The last column, minus the cell in the first row, is the $N$-dual of the $(N-1)$-th piece of the Pulsar sequence, but reversed. Hence the corresponding element in the puzzle is $N+1-p(N-1,N+1-j)$.  Subtracting, we get $N+1-p(N-1,N+1-j)-p(N-1,j-1)=N+1-N=1$, so in fact each entry in the first column is one smaller than each entry in the last column (in rows $2$ through $N$).
\end{proof}

~\\[-10pt]

We begin by computing the extreme values within each piece of the Pulsar sequence. Recall the Pulsar sequence array.

\begin{center}\begin{minipage}[t]{0.45\linewidth}\textbf{Pulsar Sequence Array}

\begin{tabular}{cccccc}
1\\
2&1\\
3&2&1\\
4&2&3&1\\
5&2&3&4&1\\
6&2&4&3&5&1\\
\vdots
\end{tabular}\end{minipage}\end{center}

The last value of each piece of the Pulsar sequence, appearing as the outer right diagonal of the Pulsar sequence  array, turns out to always be $1$. 

\begin{theorem}\label{thm:rightmost} For each $N$, $p(N,N)=1$.  
\end{theorem}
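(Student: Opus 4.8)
The plan is to interpret $p(N,N)$ concretely inside the Pulsar puzzle of size $N$ and show directly that the relevant cell must contain a $1$. Recall from the structure description that the $(N-1)$-th Pulsar piece is the first column of the size-$N$ puzzle minus its top cell, so that column reads $p(N-1,1),p(N-1,2),\ldots,p(N-1,N-1)$ from row $2$ down to row $N$. By the symmetric sum property for the $(N-1)$-th piece, $p(N-1,1)+p(N-1,N-1)=N$. The $N$-th Pulsar piece lives in the size-$(N+1)$ puzzle — but since the Pulsar Sequence is size-independent, $p(N,N)$ is just the last term of the $N$-th piece, and by the same symmetric sum property $p(N,1)+p(N,N)=N+1$. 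So it suffices to show $p(N,1)=N$, i.e. that the first entry of each Pulsar piece equals the piece's index.

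First I would pin down $p(N,1)$. The $N$-th Pulsar piece is the first column (minus the top cell) of the size-$(N+1)$ Pulsar puzzle, occupying rows $2$ through $N+1$; its first term $p(N,1)$ sits in row $2$. By Lemma~\ref{lem:first-last-gap} applied to the size-$(N+1)$ puzzle, the row-$2$ entry in the first column is exactly one less than the row-$2$ entry in the last column. The last column of the size-$(N+1)$ puzzle is (a reversed, $N{+}1$-dualized copy of) the $(N+1)$-th Pulsar-related data, and more usefully: the top row of any Pulsar puzzle is the $N$-th Dual piece, which in a size-$(N+1)$ grid is a permutation of $1,\ldots,N+1$; combined with the column constraint, the entry in column $N+1$, row $2$ cannot be $N+1$ (that value already appears in row $1$ somewhere, and in fact sits in the last column of row $1$ by the spiral construction shown in the worked examples), forcing it to be at most $N$. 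Pushing this: the first column of a Pulsar puzzle, read top to bottom, is $\{1,\ldots,N+1\}$; its row-2 entry is one less than the last column's row-2 entry, and iterating the known layout of the Dual pieces along the boundary shows the first column's row-2 entry is exactly $N$.

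The cleaner route, which I expect to be the actual argument: use the Dual function and the already-established fact that $d_N(a,b)=N+1-p(a,b)$ fills the circled cells, together with the observation that $p(N,N)=1 \iff d_{N}(N,N)=N$ is forced because $d_N(N,\cdot)$ is the first row of the size-$N$ puzzle (the $N$-th Dual piece), hence a permutation of $1,\ldots,N$, and the spiral construction places its entries in a fixed pattern whose last term is determined. So the key steps are: (i) identify which physical cells $p(N,N)$ and its dual correspond to; (ii) invoke the symmetric sum property $p(N,1)+p(N,N)=N+1$; (iii) reduce to showing $p(N,1)=N$; (iv) prove $p(N,1)=N$ via Lemma~\ref{lem:first-last-gap} plus the boundary structure, or equivalently induct on $N$ using that the size-$(N-1)$ Pulsar solution is an initial segment of the size-$N$ one. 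The main obstacle is step (iv): showing the \emph{first} entry of each piece is the index. I'd attack it by induction — assuming $p(k,1)=k$ for $k<N$, and using Lemma~\ref{lem:first-last-gap} to propagate the value $N$ into the row-$2$, column-$1$ position of the enlarged puzzle, since that entry must differ by one from the corresponding last-column entry, which the induction hypothesis and the symmetric-sum relation together control.
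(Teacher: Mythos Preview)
Your proposal has a genuine gap. You reduce the claim to showing $p(N,1)=N$ via the symmetric sum property, and then try to establish $p(N,1)=N$ using Lemma~\ref{lem:first-last-gap} together with some induction. But Lemma~\ref{lem:first-last-gap} is itself a repackaging of the symmetric sum property combined with the dual formula: its proof boils down exactly to $p(N-1,j-1)+p(N-1,N+1-j)=N$. So invoking it to get information beyond the symmetric sum relation is circular, and indeed applying it at row~$2$ of the size-$(N{+}1)$ puzzle just returns $p(N,1)+p(N,N)=N+1$ again. Your inductive sketch in step~(iv) never supplies the missing external input, and the alternate ``cleaner route'' through $d_N(N,N)$ ends with ``the spiral construction places its entries in a fixed pattern whose last term is determined,'' which is an assertion, not an argument.

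The paper's proof is direct and avoids all of this. It locates $p(N,N)$ as the bottom-left cell of the size-$(N{+}1)$ puzzle (the last cell of the $N$-th Pulsar piece, which is the first column minus the top square). In the bottom row of that puzzle every cell except the leftmost one is circled; since each circled entry exceeds each uncircled entry in its row (Theorem~1 of the earlier paper) and the row is a permutation of $1,\ldots,N{+}1$, the lone uncircled cell must carry the value~$1$. That is the missing idea: look at $p(N,N)$ itself, not $p(N,1)$, and use the row it sits in rather than the column. Note also that the paper then derives $p(N,1)=N$ \emph{from} $p(N,N)=1$ (Theorem~3), so your reduction runs opposite to the intended logical flow.
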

\begin{proof} Consider a puzzle of size $N+1$.  The bottom left, uncircled, cell is always equal to $1$ because the remaining cells of the bottom row are all circled (and, by Theorem 1, each circled number is greater than each uncircled number).  This cell appears in the Pulsar sequence as $p(N,N)$. \end{proof}



We now turn to the first value in each piece, appearing as the first column of the Pulsar sequence array.  It turns out that the pattern we see continues.

\begin{theorem}  For each $N$, $p(N,1)=N$.
\end{theorem}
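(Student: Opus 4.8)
The plan is to adapt the argument used for Theorem~\ref{thm:rightmost}, locating the entry $p(N,1)$ inside a concrete puzzle and reading off its value from the circling pattern. Since the $N$-th Pulsar piece first occurs in a puzzle of size $N+1$, I would work in such a puzzle. There, by the piece-to-cell dictionary recalled in the introduction, the $N$-th Pulsar piece is exactly the first column with its top cell removed; hence $p(N,1)$ is the entry in row $2$, column $1$.

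Next I would pin down the circling pattern of row $2$ in a size-$(N+1)$ puzzle. Its first cell lies in the $N$-th Pulsar piece, its cells in columns $2$ through $N$ make up the $(N-1)$-th Pulsar piece, and only its last cell (column $N+1$) belongs to a Dual piece, namely the $N$-th Dual piece, which is the last column minus its top cell. So row $2$ contains exactly one circled cell, in its last column. Because a solved puzzle places each of $1,\dots,N+1$ once in row $2$, and by Theorem~1 every circled number exceeds every uncircled number, that single circled entry must be the largest available value, $N+1$.

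Finally I would invoke Lemma~\ref{lem:first-last-gap} with $j=2$ in this puzzle: the entry in column $1$ is one less than the entry in column $N+1$, so it equals $(N+1)-1=N$. Since that column-$1$ entry is $p(N,1)$, this gives $p(N,1)=N$.

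I do not expect a genuine obstacle here; the only point requiring care is the bookkeeping of which piece covers which cell in a puzzle of size $N+1$ (in particular, verifying that row $2$ has precisely one circled cell, sandwiched between two Pulsar pieces and the terminal Dual piece). As an even shorter alternative, one may combine the symmetric-sum identity $p(N,1)+p(N,N)=N+1$ from \cite{pulsar1} with Theorem~\ref{thm:rightmost} to conclude immediately; I would likely note this as a remark while keeping the puzzle-based derivation as the main proof, to match the style of the preceding theorem.
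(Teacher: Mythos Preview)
Your argument is correct, but you have inverted the paper's priorities: what you relegate to a closing remark is exactly the paper's entire proof. The paper simply applies the symmetric-sum identity $p(N,1)+p(N,N)=N+1$ together with Theorem~\ref{thm:rightmost} to conclude $p(N,1)=N$ in one line. Your main argument instead re-enters the puzzle of size $N+1$, identifies $p(N,1)$ as the $(2,1)$ cell, uses the circling pattern of row~$2$ plus Theorem~1 to force the $(2,N+1)$ cell to equal $N+1$, and then applies Lemma~\ref{lem:first-last-gap}. This is a genuinely different route: it avoids the symmetric-sum property entirely and instead leverages Lemma~\ref{lem:first-last-gap}, which the paper does not use here. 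The trade-off is that your approach is self-contained at the puzzle level and parallels the proof of Theorem~\ref{thm:rightmost}, while the paper's approach is shorter and highlights that Theorem~3 is an immediate algebraic dual of Theorem~\ref{thm:rightmost}.
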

\begin{proof}
Applying the symmetric sum property, $p(N,1)+p(N,N)=N+1$.  By Theorem \ref{thm:rightmost}, $p(N,N)=1$, and the result follows.\end{proof}


\begin{theorem}
In rows $3,4,...,N-1$ of the $N$ sized puzzle, values in the first column are exactly 1 greater than values in the $(N-1)$-th column of the same row. Furthermore, for all integers $a \ge 3$ and $2 \le b \le a-1$, $p(a,b) = p(a-2,a-b) + 1$.
\end{theorem}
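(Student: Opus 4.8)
The plan is to prove the first (grid-level) assertion and then read the recursion $p(a,b)=p(a-2,a-b)+1$ off from it. The translation is pure bookkeeping: in a size-$N$ puzzle, column $1$ minus its top cell is the $(N-1)$-th Pulsar piece, so its row-$j$ entry is $p(N-1,j-1)$; and rows $3,\dots,N-1$ of column $N-1$ form the $(N-3)$-th Pulsar piece, which (read with the orientation fixed in \cite{pulsar1}) places $p(N-3,N-j)$ in row $j$. Hence the grid assertion is exactly $p(N-1,j-1)=p(N-3,N-j)+1$ for $3\le j\le N-1$, and setting $a=N-1$, $b=j-1$ rewrites this as $p(a,b)=p(a-2,a-b)+1$ for $a\ge 3$, $2\le b\le a-1$. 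So it suffices to prove the grid assertion.

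First I would use Lemma~\ref{lem:first-last-gap} to trade column $1$ for column $N$: since the column-$1$ entry is one less than the column-$N$ entry in every row from $2$ to $N$, the claim becomes that in rows $3,\dots,N-1$ the column-$N$ entry exceeds the column-$(N-1)$ entry by exactly $2$. Column $N$ minus its top cell is the $N$-dual of the $(N-1)$-th Pulsar piece, reversed, so with $k=N+1-j$ this collapses to the symmetric identity
\[
p(N-1,k)+p(N-3,k-1)=N-1,\qquad 2\le k\le N-2,
\]
which, via the symmetric-sum property of each Pulsar piece from \cite{pulsar1}, is itself equivalent to the recursion. So this one identity is the whole content.

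To prove it I would invoke Theorem~1 of \cite{pulsar1}: in every row and column of the solved puzzle the uncircled entries are the smallest values, so a line with $t$ uncircled cells carries precisely $1,2,\dots,t$ on them. Column $1$ is entirely uncircled, and in rows $3,\dots,N-1$ so is column $N-1$; moreover these are the leftmost and rightmost uncircled cells of their row. I would show that the column-$1$ cell always carries the largest uncircled value in its row — equivalently, that its entry equals the number of uncircled cells in that row — and that the column-$(N-1)$ cell carries the next largest, so that the two are consecutive integers with column $1$ on top, which is the grid assertion. The base case is row $3$, whose only uncircled cells are columns $1$ and $N-1$; they carry $\{1,2\}$, and since the column-$(N-1)$ cell is the last entry $p(N-3,N-3)=1$ of its piece, the column-$1$ cell is $2$. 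For the inductive step I would induct on $N$, peeling off one boundary ring and using that rows $3,\dots,N-1$ of column $N-1$ of the size-$N$ puzzle are, by scale-invariance of the Pulsar sequence, literally column $1$ of a size-$(N-2)$ puzzle, so that the extremality statement propagates inward while Theorem~1 records which inner pieces cross each row.

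The main obstacle is precisely the identity $p(N-1,k)+p(N-3,k-1)=N-1$, i.e.\ this extremality/counting claim: it ties together two Pulsar pieces whose indices differ by $2$, which neither Lemma~\ref{lem:first-last-gap} nor the symmetric-sum property can do — indeed any argument using only those two facts together with Theorem~\ref{thm:rightmost} and the theorem after it merely re-derives the symmetric-sum property and runs in a circle. The proof must genuinely use Theorem~1 of \cite{pulsar1} together with the recursive, self-similar way the Pulsar and Dual spirals sit inside the puzzle, and that is where I expect the real work to lie.
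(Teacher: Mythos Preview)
Your plan is correct and is essentially the paper's own argument: both use Lemma~\ref{lem:first-last-gap} together with Theorem~1 of \cite{pulsar1} to conclude that the column-$1$ entry is the largest uncircled value in its row, then invoke the nested $(N-2)\times(N-2)$ sub-puzzle (the paper calls it $P'''$, rotated $180^\circ$) to see that the column-$(N{-}1)$ entry is the largest uncircled value in the smaller puzzle and hence the second-largest in the full row, yielding the gap of~$1$. The paper bypasses your explicit induction on $N$ by simply reapplying Lemma~\ref{lem:first-last-gap} and Theorem~1 to $P'''$ directly, and it omits your detour through the symmetric identity $p(N-1,k)+p(N-3,k-1)=N-1$, but the substance is identical.
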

\begin{proof}

Take $N>3$ and consider its Pulsar puzzle $P$. Removing the first row and the first column gives us an $(N-1)\times (N-1)$ Pulsar puzzle  $P'$, which is itself a Pulsar puzzle rotated $90^\circ$ clockwise. Removing the first 2 rows of $P$ and its first and last columns gives us an $(N-2)\times (N-2)$ Pulsar puzzle  $P'''$, which is itself another Pulsar puzzle rotated $180^\circ$ clockwise. We illustrate $P$, $P'$, $P'''$ below.

\begin{center}\begin{tikzpicture}[scale=.5]
  \begin{scope}
 \filldraw[fill=green!15, draw=green!100, line width=0.5mm, draw opacity=80](1.1,0) rectangle (6,4.8);
 \filldraw[fill=yellow!55, draw=yellow!100, line width=0.5mm, draw opacity=120](1.2,0.1) rectangle (4.8,4);

        \draw (0, 0) grid (6, 6);
   \node[color=green] at (6.5,3) {P'};
   \node[color=darkyellow] at (3,-0.5) {P'''};
    \setcounter{row6}{1}
    \setrowSIX {\circl }{ \circl}{\circl }{\circl }{\circl }{\circl}
    \setrowSIX {}{}{}{}{}{\circl}
    \setrowSIX {}{\circl}{\circl}{\circl}{}{\circl}
    \setrowSIX {}{\circl}{}{\circl}{}{\circl}
    \setrowSIX {}{\circl}{}{}{}{\circl}
    \setrowSIX {}{ \circl}{\circl }{\circl }{\circl }{\circl}
    \node[anchor=center] at (3, 6.5) {P};
      \draw [blue](0.3,5.5) -- (5.8,5.5);
      \draw [blue](5.5,4.8) -- (5.5,0.2);
      \draw [blue](1.2,0.5) -- (4.8,0.5);
      \draw [blue](1.5,1.2) -- (1.5,3.8);
      \draw [blue](2.2,3.5) -- (3.8,3.5);
      \draw [blue](3.5,2.8) -- (3.5,2.2);
      \draw [red](0.5,0.3) -- (0.5,4.8);
      \draw[red](1.2,4.5)--(4.8,4.5);
      \draw[red](4.5,3.8)--(4.5,1.2);
      \draw[red](2.2,1.5)--(3.8,1.5);
      \draw[red](2.5,2.8)--(2.5,2.2);

  \end{scope}
\end{tikzpicture}\end{center}

By Lemma 1, in rows $3,4,...,N-1$ of P, values in the first column are exactly 1 less than values in the last column of the same row which consists of only circled cells. Values in the circled cells are also all greater than values in the uncircled cells in each row. Therefore, the values of cells in rows $3,4,...,N-1$ that are in the last column must be the smallest circled value in the entirety of their row, and the uncircled values in the first column must be the largest uncircled value in the entirety of their row. \medskip

Using a similar argument in P''', it can be determined that the values of cells in rows $3,4,...,N-1$ that are in the $(N-1)$-th column must be the largest uncircled value in each row of P'''. Therefore, the values of cells in rows $3,4,...,N-1$ that are in the $(N-1)$-th column must be the second-largest uncircled value in the entirety of their row. So this means that in each row $3,4,...,N-1$, the value of the cell in the first column is exactly 1 greater than the value of the cell in the $(N-1)$-th column. \medskip

This can be represented as $p(N-1,b)=p(N-3, N-1-b)+1$ where $2 \le b \le N-2$. To be more general, we have $p(a,b) = p(a-2,a-b) + 1$, where $2 \le b \le a-1$. This means an element in the Pulsar sequence can be calculated from a previous element in the Pulsar sequence. 
\end{proof}


\begin{theorem} \label{thm:closeform}
For all integers $a \ge 1$ and $1 \le b \le a$, the Pulsar sequence element $p(a,b)$ is given by $p(a,b) = (a+1)\cdot\frac{1+(-1)^n}{2}-b\cdot(-1)^n$ where $n=\frac{a-1}{2}-|b-\frac{a+1}{2}|$.
\end{theorem}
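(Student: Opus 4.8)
The plan is to run an induction on $a$ in which the claimed formula is checked against the three structural facts already proved: $p(a,a)=1$ (Theorem~\ref{thm:rightmost}), $p(a,1)=a$ (Theorem~3), and the reflection recursion $p(a,b)=p(a-2,a-b)+1$ for $a\ge 3$ and $2\le b\le a-1$ (Theorem~4). Together these three relations pin down the entire array once the rows $a=1,2$ are known, so it is enough to show that the function on the right-hand side of the statement — call it $f(a,b)$ — satisfies exactly the same relations.

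The first step is to rewrite $f$ in a workable form. With $n=n(a,b)=\tfrac{a-1}{2}-\bigl|b-\tfrac{a+1}{2}\bigr|$, a short parity check shows $n$ is always a nonnegative integer for $1\le b\le a$: the two summands are simultaneously integers (when $a$ is odd) or simultaneously half-integers (when $a$ is even), and $\bigl|b-\tfrac{a+1}{2}\bigr|\le\tfrac{a-1}{2}$, with equality precisely at $b=1$ and $b=a$. Since $\tfrac{1+(-1)^n}{2}$ equals $1$ or $0$ according as $n$ is even or odd,
\[
f(a,b)=\begin{cases} a+1-b & \text{if } n(a,b)\text{ is even},\\ b & \text{if } n(a,b)\text{ is odd}.\end{cases}
\]
The boundary values are then immediate: $n(a,1)=\tfrac{a-1}{2}-\tfrac{a-1}{2}=0$ and $n(a,a)=\tfrac{a-1}{2}-\tfrac{a-1}{2}=0$ are both even, so $f(a,1)=a$ and $f(a,a)=1$, matching Theorems~3 and~\ref{thm:rightmost}; and the base rows $a=1,2$ are verified directly.

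The crux is a single reflection identity: for $2\le b\le a-1$,
\[
n(a-2,\,a-b)=n(a,b)-1 .
\]
This reduces to $\bigl|(a-b)-\tfrac{(a-2)+1}{2}\bigr|=\bigl|b-\tfrac{a+1}{2}\bigr|$, since each side equals $\tfrac{|2b-a-1|}{2}$; subtracting this common value from $\tfrac{(a-2)-1}{2}=\tfrac{a-1}{2}-1$ yields the identity. In particular $n(a-2,a-b)$ and $n(a,b)$ have opposite parity, and a two-case check closes the recursion: if $n(a,b)$ is even then $f(a,b)=a+1-b$ while $f(a-2,a-b)=a-b$ (the odd case in row $a-2$), so $f(a-2,a-b)+1=f(a,b)$; if $n(a,b)$ is odd then $f(a,b)=b$ while $f(a-2,a-b)=(a-2)+1-(a-b)=b-1$, so again $f(a-2,a-b)+1=f(a,b)$. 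Because $1\le a-b\le a-2$, the inductive hypothesis applies to $p(a-2,a-b)$, and Theorem~4 then gives $p(a,b)=f(a,b)$ for every interior $b$, completing the induction.

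I do not expect a real obstacle here: the only delicate point is the reflection identity $\bigl|(a-b)-\tfrac{a-1}{2}\bigr|=\bigl|b-\tfrac{a+1}{2}\bigr|$, which is a one-line absolute-value computation, and everything else is bookkeeping on parities. The main thing to be careful about is keeping the even/odd cases straight and confirming that the index ranges ($a\ge 3$, $2\le b\le a-1$, $1\le a-b\le a-2$) make both Theorem~4 and the inductive hypothesis legitimately applicable.
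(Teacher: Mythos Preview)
Your proof is correct and follows essentially the same route as the paper's: verify the boundary values $b=1$ and $b=a$ via $n=0$, check small base rows, and then induct on $a$ using the recursion $p(a,b)=p(a-2,a-b)+1$ from Theorem~4. The only difference is cosmetic---you first rewrite the formula in the parity form $f(a,b)\in\{a+1-b,\,b\}$ and isolate the reflection identity $n(a-2,a-b)=n(a,b)-1$, whereas the paper carries out the same inductive step by direct algebraic manipulation of the $(-1)^n$ expression; the underlying argument is identical.
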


\begin{proof}
We proceed by first verifying the boundary cases when $b=a$ or $b=1$.
If $b=a$, then $n=\frac{a-1}{2}-\frac{a-1}{2}=0$ so $p(a,a)=(a+1)-a=1$ which is also what we proved in Theorem 2. If $b=1$, then $n=\frac{a-1}{2}-|1-\frac{a+1}{2}|=\frac{a-1-a-1}{2}+1=0$ so $p(a,1) = (a+1)-1= a$ which is also what we proved in Theorem 3.

Next, we compute the base cases. For $a=1,2,3,4$ we can compute directly from the recursive definition $p(a,b) = p(a-2,a-b) + 1$:

\begin{align*}
p(1,1) &= 1 \\
p(2,1) &= 2, & p(2,2) &= 1 \\
p(3,1) &= 3, & p(3,2) &= 2, & p(3,3) &= 1 \\
p(4,1) &= 4, & p(4,2) &= 2, & p(4,3) &= 3, & p(4,4) &= 1
\end{align*}

\noindent These all agree with the closed form, so the base cases are verified. \smallskip\\

By the induction hypothesis, assume the closed form holds for some fixed $a \ge 5$ and all $2 \le b \le a-1$.\\

\noindent From Theorem 4, we know,
\[
p(a+2,a+2-b)=p(a,b)+1 \quad \text{for} \quad 2 \le (a+2-b) \le (a+1).
\]

By the induction hypothesis, 
\[
p(a,b) = (a+1)\cdot\frac{1+(-1)^n}{2}-b\cdot(-1)^n  \quad \text{, where} \quad n=\frac{a-1}{2}-|b-\frac{a+1}{2}|.
\]
Substituting this into the recurrence gives,
\begin{align*}
p(a+2,a+2-b) &=(a+1)\cdot\frac{1+(-1)^n}{2}-b\cdot(-1)^n+1 \\
             &=\tfrac{a+3}{2}+(a+1)\cdot \frac{(-1)^n}{2} - b\cdot(-1)^n \\
             &=\tfrac{a+3}{2}+(a+1)\cdot \frac{(-1)^n}{2} - (a+2)\cdot(-1)^n + (a+2)\cdot(-1)^n -b\cdot(-1)^n \\
             &=\tfrac{a+3}{2}+(a+1-2a-4)\cdot \frac{(-1)^n}{2}+(a+2-b)\cdot(-1)^n.
\end{align*}
\noindent Then the expression becomes: 
\[
p(\underline{a+2},\uwave{a+2-b})=(\underline{a+2}+1)\cdot\frac{1+(-1)^{n+1}}{2}-\uwave{(a+2-b)}\cdot(-1)^{n+1}.
\]
where $n+1$ can be rewritten as:
\begin{align*}
n+1 &=\tfrac{a-1}{2} - \bigl|\,b - \tfrac{a+1}{2}\,\bigr|+1 \\
    &=\tfrac{(a+2)-1}{2}-\bigl|\,(a+2-b) - \tfrac{(a+2)+1}{2}\,\bigr|.
\end{align*}
This matches exactly the closed form for $p(a+2, a+2-b)$. \medskip\\

The boundary cases $b=1$ and $b=a$ were verified at the start, and the induction step covers $2\le b\le a-1$; hence, by induction on $a$, the formula is valid for all $a\ge 1$ and all $1\le b\le a$. This completes the proof.\end{proof}


\begin{corollary}
An alternate formula for $p(a,b)$, where equivalence is modulo 2: 

\noindent Set $q(a,b)=\begin{cases}1 & 2b< a+1\\ a & 2b \ge a+1\end{cases}$.  Then $p(a,b)=\begin{cases}a+1-b & b\equiv q(a,b)\pmod{2} \\ b & b\not \equiv q(a,b)\pmod{2}\end{cases}$.

\end{corollary}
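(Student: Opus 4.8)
The plan is to derive the corollary directly from the closed form established in Theorem \ref{thm:closeform}. The key first observation is that the expression $p(a,b) = (a+1)\cdot\frac{1+(-1)^n}{2} - b\cdot(-1)^n$ takes only two possible shapes, governed entirely by the parity of $n$: if $n$ is even then $(-1)^n = 1$ and $p(a,b) = (a+1) - b$, whereas if $n$ is odd then $(-1)^n = -1$ and $p(a,b) = b$. So the entire content of the corollary is the claim that $n$ is even precisely when $b \equiv q(a,b) \pmod 2$.

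To verify this, I would clear denominators and write $2n = (a-1) - |\,2b - (a+1)\,|$, which is legitimate because $n$ is always an integer even though $\frac{a\pm 1}{2}$ need not be. Then I split along the two cases defining $q$. If $2b < a+1$, so $q(a,b) = 1$, the absolute value equals $a+1-2b$ and hence $2n = 2b - 2$, i.e. $n = b-1$; thus $n$ is even exactly when $b$ is odd, i.e. exactly when $b \equiv 1 = q(a,b) \pmod 2$. If instead $2b \ge a+1$, so $q(a,b) = a$, the absolute value equals $2b - a - 1$ and hence $2n = 2a - 2b$, i.e. $n = a-b$; thus $n$ is even exactly when $a \equiv b \pmod 2$, i.e. exactly when $b \equiv a = q(a,b) \pmod 2$. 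In both cases the conclusion is the same: $n$ is even if and only if $b \equiv q(a,b) \pmod 2$.

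Putting the two pieces together finishes the argument. When $b \equiv q(a,b) \pmod 2$ the number $n$ is even, so $p(a,b) = a+1-b$; when $b \not\equiv q(a,b) \pmod 2$ the number $n$ is odd, so $p(a,b) = b$. This is exactly the stated case split, so the alternate formula holds for all $a \ge 1$ and $1 \le b \le a$. There is no real obstacle here — it is a short parity computation — and the only point demanding a little attention is the one flagged above: one should multiply through by $2$ before discussing the parity of $n$, rather than trying to track parities of the half-integers $\frac{a-1}{2}$ and $\frac{a+1}{2}$ separately.
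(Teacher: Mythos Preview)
Your proof is correct and follows essentially the same route as the paper: both split on the sign of $2b-(a+1)$, compute $n$ in each half, and read off the parity. The one minor difference is that for the case $2b\ge a+1$ the paper invokes the symmetric sum property $p(a,b)=a+1-p(a,a-b+1)$ to reduce to the first half, whereas you compute $n=a-b$ directly from the absolute value; your handling is slightly more self-contained since it uses only Theorem~\ref{thm:closeform}.
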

\begin{proof}
First consider the case when $1\le b\le \frac{a+1}{2}$. From Theorem \ref{thm:closeform}, we have $n=\frac{a-1}{2}-|b-\frac{a+1}{2}|=\frac{a-1}{2}-(\frac{a+1}{2}-b)=b-1$. So when $b\equiv 1, n\equiv 0$, and when $b\equiv 0, n\equiv 1$. Recall the formula from Theorem $5$:
\[p(a,b)=(a+1)\cdot\frac{1+(-1)^n}{2}-b\cdot(-1)^n. \]
So when $b\equiv 1$, $p(a,b) = a+1-b$. When $b\equiv 0$, $p(a,b) = b$.\medskip\\
From the symmetric sum property, we know:\\
$p(a,1)+p(a,a)=p(a,2)+p(a,a-1)=\cdots =p(a,a-b+1)+p(a,b) = a+1$. So for the case when $\frac{a+1}{2} \le b \le a$, we have $p(a,b) = a+1-p(a,a-b+1)$.\medskip\\
When $(a-b+1) \equiv 1$, it means $(a-b) \equiv 0$ or $a\equiv b$. When $(a-b+1) \equiv 0$, it means $(a-b) \equiv 1$ or $a \not\equiv b$.\medskip\\ 
Therefore, when $a\equiv b$: \\
$p(a,b)=a+1-p(a, a-b+1)=a+1-[(a+1)-(a-b+1)]=a+1-b$. \\
Otherwise: \\
$p(a,b)=a+1-p(a, a-b+1)=a+1-(a-b+1)=b$.\end{proof}


\begin{thebibliography}{2}
	\bibitem{ctc}
	  The Sudoku Discovery Of The Decade: The Sequel!!!, published 7/17/2025,
      \newblock
    \url{https://www.youtube.com/watch?v=uymIHULB12c}.
    \bibitem{pulsar1}
    Vadim Ponomarenko. The Pulsar Sequence. Under review.
    \url{https://arxiv.org/abs/2507.14701}.
	
\end{thebibliography}
\end{document}